\def\CP{{\mathbf C\mathbf P}}
\def\MU{{\mathbf M\mathbf U}}
\def\MSU{{\mathbf M\mathbf S \mathbf U}}
\newcommand{\SU}{\mathop{\mathrm{SU}}}
\numberwithin{equation}{section}
\newtheorem{theorem}{Theorem}[section]
\newtheorem{lemma}{Lemma}
\theoremstyle{definition}
\newtheorem{remark}{Remark}
\begin{document}
	
\subjclass[2010]{55N22; 55N35}

\keywords{Complex bordism, $SU$-bordism,  Formal group law, Complex elliptic genus}

\title[ ]{Complex cobordism modulo $c_1$-spherical cobordism and related genera}
 
\author[]{M. Bakuradze}

\address{Faculty of exact and natural sciences, A. Razmadze Math. Institute, Iv. Javakhishvili Tbilisi State University, Georgia}

\thanks{The author was supported by EU fellowships for Georgian researchers, 2023 (57655523)}


\email{malkhaz.bakuradze@tsu.ge}

\begin{abstract}
We prove that  the ideal  in complex cobordism ring $\MU^*$  generated by the polynomial generators $S=(x_1, x_k, k\geq 3)$ of $c_1$-spherical cobordism ring $W^*$, 
viewed as elements in $\MU^*$ by forgetful map is prime.  Using the Baas-Sullivan theory of cobordism with singularities we define a commutative complex oriented cohomology theory $\MU^*_S(-)$, complex cobordism modulo $c_1$-spherical cobordism, with the coefficient ring $\MU^*/S$. Then
any $\Sigma\subseteq S$ is also regular in $\MU^*$ and therefore gives a multiplicative complex oriented  cohomology theory $\MU^*_{\Sigma}(-)$.  The generators of $W^*[1/2]$ can be specified in such a way that for $\Sigma=(x_k, k\geq 3)$ the corresponding cohomology is identical to the Abel cohomology, previously constructed in \cite{BUSATO}. Another example corresponding to $\Sigma=(x_k, k\geq 5)$ gives the coefficient ring of the universal Buchstaber formal group law after tensored by $\mathbb{Z}[1/2]$, i.e., is identical to the scalar ring of the Krichever-Hoehn complex elliptic genus \cite{KR}, \cite{H}.
\end{abstract}

\maketitle

\section{Introduction and statements }

The theory of complex cobordism $\MU^*(-)$ and special unitary cobordism  $\MSU^*(-)$ play an important role in cobordism theory \cite{Stong}.
The ring $\MSU^*$ is torsion free after localized away from 2 
\begin{equation}
\label{MSU_*}
\MSU^*[1/2]=\mathbb{Z}[1/2][y_2, y_3,\cdots],\,\,\,\,|y_i|=2i.
\end{equation}
 
With $SU$-structure forgetful inclusion in mind 
$$\MSU^*[1/2]\subset \MU^*[1/2]=\mathbb{Z}[1/2][a_1,a_2,a_3,\cdots],\,\,\,\,|a_i|=2i,$$
the generators $y_i$  
can be treated as elements in $\MU^*[1/2]$. 

In particular $y_i$ is a $SU$-manifold if and only if all Chern numbers of $y_i$ having factor $c_1$ are zero. 
Then we have to check the main Chern number $s_i(y_i)$ for Novikov's criteria  \cite{NOV} (see \eqref{Novikov} below) for the membership of the set of polynomial generators in $\MSU^*[1/2]$.   

In \cite{C-F} Conner and floyd introduced $c_1$-spherical cobordism groups $W^{2n}\subset \MU^{2n}$.  A complex cobordism class belongs to $W^{2n}$ if and only if every Chern number involving $c_1^2$ vanishes.

$W^*$ is not a subring of $\MU^*$. However, with respect to some complex orientation and some $*$ multiplication  
(see Section \ref{Pre}), the ring $W^*$ is polynomial on generators in
every positive even degree except 4, \cite{C-P}, \cite{L-C-P}: 
\begin{equation}
	\label{W_*}
	W^*=Z[x_1, x_k : k \geq 3 ],\,\, x_1 = \CP_1,\, x_k \in W_{2k}.
\end{equation}
The polynomial generators $x_k$ can be specified by the condition for the main Chern numbers
$s_k(x_k)= d(k) d(k-1)$ for $k \geq 3$, where
\begin{equation*}
	d(k)=
	\begin{cases} p, &\mbox{if $k+1=p^s$ for some prime  $p$,}\\
		1, &\mbox{otherwise}.
	\end{cases}
\end{equation*}

By \cite{C-P} the cohomology theory $W^*(-)$ is complex oriented and (by Proposition 3.12) the ring $W^*[1/2]$ is generated by the coefficients of the corresponding formal group law 
$F_W$. In particular

\begin{align}
\label{free}
 &W^*[1/2]\text{ is a free } \MSU^*[1/2] \text{ module generated by 1 and } [\CP_1].
\end{align}

\bigskip

Our main observation is that the sequence of polynomial generators of $W^*$, $(x_k,  k\geq 3)$ in \eqref{W_*} viewed as elements in $\MU^*$ by forgetful map, generate the same ideal in $\MU^*[1/2]$ as the coefficients $\alpha_{ij}, i,j\geq 2$ of the universal formal group law. 
 
We prove in Section 3 one of our main results

\begin{theorem}
	\label{regular}
	Let  $S=(x_1, x_k,  k\geq 3)$ be a sequence of polynomial generators of $c_1$-spherical cobordism $W^*$. Then
	
	i) $S$ is regular in $\MU^*$. 
	
	ii) any subsequence $\Sigma$ of  $S$ is regular in $\MU^*$.
\end{theorem}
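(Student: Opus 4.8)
The plan is to deduce (ii) from (i) via a general fact about graded rings, to reduce (i) to statements over $\Q$ and over each $\Z_{(p)}$, to dispose of the rational case by a triangular change of variables, and to concentrate the real effort on the $p$-local case by controlling the ideals $(x_1,x_3,\dots ,x_k)\subseteq \MU^*$. First I would record that a permutation of a regular sequence of positive-degree homogeneous elements in a $\Z$-graded ring is again a regular sequence: it suffices to treat an adjacent transposition of a length-two regular sequence $(f,g)$, and there $g$ is a nonzerodivisor because $gx=0$ forces $x\in\bigcap_n (f^n)=0$ for degree reasons, while $f$ is a nonzerodivisor modulo $(g)$ by a symmetric argument. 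Since $\MU^*$ has all its polynomial generators in positive even degree, any subsequence $\Sigma$ of $S$ arises, after permuting a long enough finite initial segment of $S$, as an initial segment of a regular sequence; hence (ii) follows from (i).

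For (i), the ring $\MU^*=\Z[a_1,a_2,\dots ]$ is $\Z$-torsion free and $S$ is homogeneous of positive degree, so the injectivity of multiplication by $x_k$ on $\MU^*/(x_1,x_3,\dots ,x_{k-1})$ can be tested after $-\otimes \Q$ and after $-\otimes \Z_{(p)}$ for each prime $p$. Over $\Q$ choose polynomial generators $a_k$ of $\MU^*\otimes \Q$ normalized by $s_k(a_k)=1$. Each $x_k$ has $s_k(x_k)\ne0$ (equal to $2$ for $x_1=\CP_1$ and to $d(k)d(k-1)$ for $k\ge3$), so $x_k=s_k(x_k)\,a_k+(\text{a polynomial in }a_1,\dots ,a_{k-1})$, and therefore $\{x_1\}\cup\{x_k:k\ge 3\}\cup\{a_2\}$ is obtained from $\{a_k\}$ by an invertible triangular substitution; thus $S$ is a regular sequence over $\Q$, and $(\MU^*\otimes \Q)/S\cong \Q[a_2]$.

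The $p$-local statement is the crux. Over $\Z_{(p)}$ the triangular argument goes through for every $x_k$ whose leading Chern number $s_k(x_k)=d(k)d(k-1)$ has the same $p$-adic valuation as the value $d(k)$ of a $\Z_{(p)}$-polynomial generator; by the definition of $d$ the only exceptions inside $S$ are the $x_k$ with $k$ a positive power of $p$ (so that $d(k-1)=p$), for which $x_k$ is $p$ times a polynomial generator modulo decomposables and the quotient picks up $p$-torsion at that stage. To push through here I would use the structural input of the excerpt: the $c_1$-sphericity of $x_k$ severely limits which decomposables can occur, and, combined with $s_k(x_k)=d(k)d(k-1)$ and the vanishing of all Chern numbers divisible by $c_1^2$, it forces the Chern number of $x_k$ that survives reduction modulo the lower-degree members of $S$ to be prime to $p$; the coincidence of $(x_k:k\ge 3)$ with the ideal of the coefficients $\alpha_{ij}$, $i,j\ge 2$, of the universal formal group law after inverting $2$, together with the freeness \eqref{free} of $W^*[1/2]$ over $\MSU^*[1/2]$, then pins down the rest. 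Concretely I would prove the sharper assertion that every finite ideal $(x_1,x_3,\dots ,x_k)\subseteq \MU^*$ is prime, by induction on $k$: granting that $\MU^*/(x_1,x_3,\dots ,x_{k-1})$ is a domain, the image of $x_k$ there is nonzero (it is nonzero rationally by the triangular step) hence a nonzerodivisor, so $S$ is regular; the remaining content is that the next quotient is again a domain. As a byproduct this yields the primality of $(S)=\bigcup_k (x_1,x_3,\dots ,x_k)$ announced in the abstract, an increasing union of prime ideals.

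The \emph{main obstacle} is precisely this last point: controlling the decomposable corrections to the $x_k$ at the primes $p$ for which some power $p^t$ occurs as an index, tightly enough to verify that $\MU^*/(x_1,x_3,\dots ,x_k)$ stays an integral domain. The normalization $s_k(x_k)=d(k)d(k-1)$ fixes only the indecomposable part of $x_k$, so beyond it the argument must genuinely exploit the polynomial structure of $W^*$ with its twisted multiplication, the module freeness \eqref{free}, and the identification of $(x_k:k\ge 3)$ with the $\alpha_{ij}$-ideal after inverting $2$.
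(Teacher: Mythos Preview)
Your reduction of (ii) to (i) via the graded permutation lemma is correct and matches what the paper does. Your rational case is fine, and your identification of the problematic indices $k=p^t$ over $\Z_{(p)}$ is accurate. But the proposal stops exactly where the real content begins: you assert that you ``would prove the sharper assertion that every finite ideal $(x_1,x_3,\dots ,x_k)\subseteq \MU^*$ is prime, by induction on $k$'', and then say ``the remaining content is that the next quotient is again a domain'' without supplying any mechanism for that step. Knowing $s_k(x_k)=d(k)d(k-1)$ and that all Chern numbers divisible by $c_1^2$ vanish is not enough to force the decomposable part of $x_k$ into the ideal generated by the lower $x_i$; the vague invocation of $c_1$-sphericity, freeness \eqref{free}, and the $\alpha_{ij}$-ideal does not constitute an argument. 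As written this is a plan, not a proof, and the missing step is precisely the theorem.

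The paper fills this gap by an entirely different route, and it is worth knowing because it explains why the ingredients you list are relevant. The paper never attempts a direct $p$-local Chern-number analysis. Instead it proves (Lemma~\ref{main lemma}) that the ideal $(y_2,x_3,\dots ,x_l)$ in $\MU^*$ coincides with $(y_2,z_3,\dots ,z_l)$, where the $z_k$ are explicit linear combinations of the formal-group coefficients $\alpha_{ij}$ generating the Abel ideal $I_{Ab}(l)$ of \cite{B-KH}, \cite{BUSATO}. The primality of $I_{Ab}(l)$ is a known, nontrivial theorem (Lemma~\ref{regular-J} here, due to Bukhshtaber--Kholodov and Busato), imported as a black box. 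The inclusion $J_W(l)\subseteq \tilde I(l)$ uses that the Abel genus is a specialization of the Krichever genus together with the quasi-toric $SU$-representatives for $y_i$, $i\ge 5$, and the freeness \eqref{free}; the reverse inclusion is an induction comparing $x_l$ with $z_l$ via $s_l(x_l)=s_l(z_l)$. Once that identification is made, regularity of $(x_3,\dots ,x_l)$ follows from primality of $I(l)$, and appending $x_1=\CP_1$ is immediate since the quotient is a domain. So the hard arithmetic you were bracing for is outsourced to the structure theory of the Abel formal group law; completing your direct approach would amount to reproving that structure theorem.
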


\bigskip

Then the use of Baas-Sullivan theory of cobordism with singularities defines  
a multiplicative cohomology  theory
$\MU^*_{\Sigma}(-)$,
with the scalar ring $\MU^*/(\Sigma)$. 

The obstruction to commutativity in $\MU^*_{\Sigma}(-)$ studied in \cite{MI}. In some cases all obstructions vanish only for dimensional reasons. So is for $\Sigma=S$ in Theorem \ref{regular}.

\medskip

In Section 3  we define $\MU^*_{S}(-)$, the complex cobordism modulo $c_1$-spherical cobordism. 

\begin{theorem}
	\label{h} Let $\Sigma=S$ in Theorem \ref{regular}.
	There is a commutative complex oriented cohomology theory $\MU^*_{S}(-)$,
	with the coefficient ring $\MU^*/S$. 
\end{theorem}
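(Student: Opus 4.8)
**Proof proposal for Theorem 1.2 (existence of $\MU^*_S(-)$).**

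The plan is to apply the Baas--Sullivan theory of cobordism with singularities to the regular sequence $S=(x_1,x_k,\,k\geq 3)\subset\MU^*$ supplied by Theorem \ref{regular}(i), and then check that the resulting theory is both complex orientable and commutative. First I would recall the general machinery: given a regular sequence $\sigma=(\sigma_1,\sigma_2,\dots)$ in $\MU^*$, Baas and Sullivan construct a spectrum, obtained by iteratively coning off the maps given by multiplication by the $\sigma_i$'s (realized geometrically by manifolds representing the classes), whose homotopy ring is $\MU^*/(\sigma)$ and which represents a cohomology theory $\MU^*_\sigma(-)$. One must be slightly careful with the infinite sequence: the generators of $S$ live in unboundedly high degrees, so the construction is a colimit over finite truncations $S_n=(x_1,x_3,\dots,x_{n})$, and since in any fixed degree only finitely many $x_k$ are relevant the colimit behaves well and $\pi_*=\MU^*/S$ follows from the regularity of $S$ (Theorem \ref{regular}). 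This gives the underlying additive theory with the stated coefficient ring.

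Next I would address complex orientability. The Baas--Sullivan spectrum $\MU_S$ receives a canonical ring map from $\MU$ (collapse of the zero-skeleton of the singularity filtration, i.e. the "no singularities" summand), and complex orientability is inherited along ring maps out of $\MU$: the image of the universal Thom class $x^{\MU}\in\widetilde{\MU}^2(\CP^\infty)$ under $\MU^2(\CP^\infty)\to\MU_S^2(\CP^\infty)$ restricts to a generator of $\widetilde{\MU_S}^2(\CP^1)\cong\MU_S^{0}(\mathrm{pt})$, because the corresponding statement holds in $\MU$ and $\MU_S^0(\mathrm{pt})=\MU^0$ (the relations in $S$ are all in positive degree). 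Hence $\MU_S$ is complex oriented, with formal group law the image of the universal one over $\MU^*/S$.

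The remaining, and genuinely delicate, point is commutativity of the product on $\MU_S$. In general the Baas--Sullivan product on a cobordism-with-singularities spectrum need not be commutative: the obstruction is measured by certain secondary operations / bordism classes (this is precisely what is analyzed in \cite{MI}). The key step is therefore to quote the obstruction theory of \cite{MI} and observe that, for the specific sequence $\Sigma=S$, the obstruction classes live in groups that vanish for dimensional reasons — as already asserted in the paragraph preceding the theorem. Concretely, the obstruction to commuting the cones on $x_i$ and $x_j$ lands in a quotient of $\MU^*/(S)$ in a degree that, for the degrees $|x_1|=2$ and $|x_k|=2k$ occurring in $S$, is negative or otherwise forces the group to be zero; since $\MU^*/S$ is concentrated in non-negative even degrees (being a quotient of $\MU^*$ by a sequence of positive-degree elements, and torsion-free rationally by \eqref{free}-type considerations), all obstructions vanish. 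I expect this dimension-count to be the main obstacle to write cleanly, since it requires matching the indexing conventions of \cite{MI} with the degrees $|x_k|=2k$ here; everything else is a formal consequence of Theorem \ref{regular} together with standard Baas--Sullivan formalism.
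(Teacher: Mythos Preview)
Your overall architecture---Baas--Sullivan on the regular sequence from Theorem~\ref{regular}, complex orientability via the canonical $\MU\to\MU_S$, and Mironov's obstruction theory for commutativity---is exactly the paper's approach. The gap is in the dimension argument for commutativity, which as written does not go through.

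The Mironov obstruction is an operation $\tilde{\mathcal{P}}_2:\MU^{-2n}\to\MU^{-4n-2}\otimes\mathbb{F}_2$, so for a generator $x_i$ of real degree $2i$ the obstruction $\tilde{\mathcal{P}}_2(x_i)$ sits in real degree $4i+2$, i.e.\ \emph{complex} degree $2i+1$. This is neither negative nor odd in real degree, so your two proposed reasons (``negative degree'' and ``$\MU^*/S$ concentrated in non-negative even degrees'') both fail: the latter holds for every quotient of $\MU^*$ and carries no information. What the paper actually uses is that the obstruction has \emph{odd complex} degree, together with the nontrivial input from Remark~\ref{LambdaAB generators} that $\Lambda_{Ab}\otimes\mathbb{F}_2$ is multiplicatively generated in complex degrees $2^k$, $k\geq 0$. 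Since $\MU^*/S\otimes\mathbb{F}_2$ is the further quotient by $x_1$, which kills the unique generator in odd complex degree~$1$, the target ring is concentrated in \emph{even complex} degrees and the odd-complex-degree obstructions vanish. (The paper also records an alternative: invoke Busato's computation that $\tilde{\mathcal{P}}_2(\alpha_{ij})\equiv 0$ in $\Lambda_{Ab}\otimes\mathbb{F}_2$ for $i,j\geq 2$, and then check the one new case $\tilde{\mathcal{P}}_2(x_1)\equiv\alpha_{22}=x_3\bmod x_1$, which is already in $S$.) Either way, the substance of the commutativity step is the structure of $\Lambda_{Ab}\otimes\mathbb{F}_2$, not a bare degree bound.
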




\bigskip

Let  $\CP_i$ be the cobordism class of complex projective spae of dimension $2i$ and
 $$F_U=\sum \alpha_{ij}x^iy^j$$ 
be the universal formal group law of complex cobordism.
One can choose the following polynomial generators $y_2, y_3$ and $y_4$ in $\MSU^*[1/2]$ viewed as elements in $\MU^*[1/2]$  (see Section \ref{y2y3y4})

\begin{align}
	\label{234}
	&y_2=\CP_2-\frac{9}{8}\CP_1^2,	
	&&y_3=-\alpha_{22},
	&&y_4=-\alpha_{23}+\frac{3}{2}\alpha_{22}\CP_1.
\end{align}

\bigskip

We need the following result  on quasi-toric representatives in the $SU$-bordism ring.

{\bf Theorem} (\cite{L-C-P}, Thm 10.8.) There are quasi-toric $SU$-manifolds $M^{2i}$ , $i\geq 5$, with 
$s_i(M^{2i})=d(i)d(i-1)$ 
for odd $i$ and $s_i(M^{2i})=2d(i)d(i-1)$ for even $i$, and they have the minimum possible characteristic numbers $s_i$ and represent polynomial generators of $\MSU^*[1/2]$. 

\medskip

Let us specify the generators in \eqref{MSU_*} as
$$y_i=[M^{2i}], i\geq 5.$$
The motivation to this is that the Krichever complex elliptic genus (see \cite{B-P}, § E.5)
vanishes on these generators.

\medskip

We could also choose $y_i, i\geq 5$ based on the following result

\medskip

{\bf Theorem} (\cite{TO}, Thm 6.1.) The kernel of the complex elliptic genus on $\MSU^*[1/2]$ is equal to the ideal of $SU$-flops.

\bigskip

One interesting example of Theorem \ref{regular} is  the Abel cohomology $h^*_{Ab}$ constructed in \cite{BUSATO}.

\begin{theorem}
	\label{abelian}
	Let $\Sigma=(x_i, i\geq 3 )$, where $x_i=y_i$ is specified above.Then   $\MU^*_{\Sigma}(-)[1/2]$ is identical to the Abel cohomology $h^*_{Ab}[1/2]$. 
\end{theorem}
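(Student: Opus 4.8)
The plan is to identify both cohomology theories via their coefficient rings together with the complex orientation / formal group law they carry, since a complex oriented cohomology theory obtained from $\MU^*(-)$ by killing a regular sequence is determined up to canonical isomorphism by the quotient ring $\MU^*/(\Sigma)$ and the induced orientation. So the first step is to recall the construction of the Abel cohomology $h^*_{Ab}$ from \cite{BUSATO}: it is built from $\MU^*(-)[1/2]$ by a Baas--Sullivan construction killing a specific regular sequence, and its coefficient ring is $\MU^*[1/2]$ modulo a sequence whose image in $\MU^*[1/2]=\mathbb{Z}[1/2][a_1,a_2,\dots]$ is, after the identification in \eqref{MSU_*}, the set of $SU$-generators $y_i$ for $i\geq 3$ (equivalently, the Abel formal group law is the universal one whose logarithm has the Abel form, and $h^*_{Ab}[1/2]$ carries this group law). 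I would then match this presentation against $\MU^*_{\Sigma}(-)[1/2]$ for $\Sigma=(x_i,i\geq 3)$ with $x_i=y_i$ as specified before the statement: by Theorem \ref{regular}(ii) this $\Sigma$ is regular in $\MU^*$, so the Baas--Sullivan theory yields $\MU^*_{\Sigma}(-)$ with coefficient ring $\MU^*/(\Sigma)$, and after inverting $2$ this is $\MU^*[1/2]/(y_i:i\geq 3)$.

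The second step is the ring-level comparison: I must show that the ideal $(y_i:i\geq 3)\subset\MU^*[1/2]$ is exactly the ideal defining the Abel coefficient ring. Using \eqref{free}, $W^*[1/2]$ is free over $\MSU^*[1/2]$ on $1$ and $[\CP_1]$, and \eqref{MSU_*} gives $\MSU^*[1/2]=\mathbb{Z}[1/2][y_2,y_3,\dots]$; combined with \eqref{W_*} (so $W^*[1/2]=\mathbb{Z}[1/2][x_1,x_k:k\geq 3]$ with $x_1=[\CP_1]$) this pins down how the $y_i$ sit inside $W^*[1/2]$ and hence, via the forgetful map, inside $\MU^*[1/2]$. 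The key point flagged in the introduction — that the $x_k$ ($k\geq3$) generate the same ideal in $\MU^*[1/2]$ as the formal group law coefficients $\alpha_{ij}$, $i,j\geq2$ — together with the explicit relations \eqref{234} expressing $y_3,y_4$ in terms of $\alpha_{22},\alpha_{23},\CP_1$, lets me translate between the "$y_i$" presentation and whatever presentation \cite{BUSATO} uses for the Abel ring, since the Abel group law's defining relations are also naturally expressed through the $\alpha_{ij}$. I would verify that modulo the ideal generated by the $y_i$, $i\geq3$, the universal group law $F_U$ becomes precisely the Abel group law after inverting $2$, which identifies the two quotient rings compatibly with their formal group laws.

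The third step is to upgrade the coefficient-ring-plus-FGL identification to an isomorphism of cohomology theories. For this I would invoke the general principle that if two multiplicative complex oriented theories are each obtained from $\MU^*(-)[1/2]$ by the Baas--Sullivan process of killing regular sequences that generate the same ideal, then the resulting theories are canonically isomorphic (the Baas--Sullivan spectrum depends, up to equivalence, only on the ideal, not the chosen regular generators); this is exactly the flexibility exploited in Theorem \ref{regular}(ii) and in the sentence "The generators of $W^*[1/2]$ can be specified in such a way that\dots". Both theories are commutative after inverting $2$ — for $\MU^*_{\Sigma}(-)$ this follows from the commutativity results cited for $\MU^*_S(-)$ restricted along $\Sigma\subseteq S$ (or directly by the dimension/obstruction analysis of \cite{MI}), and for $h^*_{Ab}$ it is part of \cite{BUSATO} — so the isomorphism respects the multiplicative structure.

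The main obstacle I anticipate is the bookkeeping in the second step: \cite{BUSATO} presumably defines the Abel ring by a concrete recursion or by explicit generators that are \emph{not} literally the $y_i$, so the real work is the algebraic identity showing these two ideals in $\mathbb{Z}[1/2][a_1,a_2,\dots]$ coincide — i.e. that the specified $SU$-generators $y_i$, $i\geq3$, are, modulo lower terms, interchangeable with the Abel generators. The relations \eqref{234} and the $\alpha_{ij}$-ideal description reduce this to a finite base case ($y_3,y_4$) plus an inductive step controlled by the main Chern numbers $s_i$ and the numbers $d(i)d(i-1)$, but checking that the induction closes up — that no obstruction appears in a single degree — is where care is needed. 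Everything else (regularity from Theorem \ref{regular}, existence of the theory, commutativity, orientation) is either already available in the excerpt or formal.
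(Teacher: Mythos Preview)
Your proposal is correct and follows essentially the same route as the paper: the paper's one-line proof simply invokes Lemma~\ref{main lemma} (whose proof contains precisely the ideal-equality argument you sketch in step two, namely the inclusion $y_i\in I_{Ab}$ via the Krichever/Abel specialization together with the inductive comparison of $z_l$ and $x_l$ using $s_l(z_l-x_l)=0$) to obtain $(x_3,x_4,\dots)=(z_3,z_4,\dots)=I_{Ab}$ in $\MU^*[1/2]$, and then identifies the two Baas--Sullivan theories built on the same ideal. Your anticipated ``main obstacle'' is exactly what Lemma~\ref{main lemma} packages, and your step three is the implicit appeal to Busato's construction that the paper also makes.
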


Another example of Theorem \ref{regular} is related to the Buchstaber formal group law $F_B$ \cite{B2}, \cite{B4}. After tensored with
 rationals the corresponding classifying map $f_B$ is identical \cite{BV} to Krichver-Hoehn complex elliptic genus \cite{KR}, \cite{H}. The scalar ring  $\Lambda_B$ of  $F_B$ is calculated in \cite{BU-U}. In particular, it has only 2-torsion and the quotient 
$$\Lambda_{\mathcal{B}}=\Lambda_{B}/torsion $$ 
is an integral domain. We will consider this example in Section 3 and prove the following

\begin{theorem} 
	\label{example2}
	Let $\Sigma=(x_i, i\geq 5 )$, where $x_i=y_i$ is specified above. Let $\Lambda_B$ be the scalar ring of the universal Buchstaber formal group law. Then cohomology theory $\MU^*_{\Sigma}(-)[1/2]$ is multiplicative, commutative, complex oriented and has the coefficient ring identical to
	$\MU^*[1/2]/\Sigma:=\Lambda_{B}[1/2]$.	
\end{theorem}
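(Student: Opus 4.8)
The plan is to reduce Theorem \ref{example2} to the comparison of two quotient rings of $\MU^*[1/2]$ via classifying maps of formal group laws. First I would invoke Theorem \ref{regular}(ii): since $\Sigma=(x_i, i\geq 5)$ is a subsequence of $S$, it is a regular sequence in $\MU^*$, hence a fortiori regular in $\MU^*[1/2]$, so the Baas--Sullivan machinery yields a complex oriented, multiplicative cohomology theory $\MU^*_\Sigma(-)[1/2]$ with coefficient ring $\MU^*[1/2]/(\Sigma)$. Commutativity should follow as in Theorem \ref{h}: the obstructions of \cite{MI} to commutativity live in specific degrees, and since $\Sigma$ omits only the low-degree generators $x_1,x_3,x_4$, one checks that the omitted generators do not obstruct — equivalently, that the relevant obstruction groups vanish for dimensional reasons, exactly the phenomenon cited after Theorem \ref{regular}. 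So the substantive content is the identification of rings $\MU^*[1/2]/(\Sigma)\cong\Lambda_B[1/2]$.

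For that identification I would argue as follows. The ring $\Lambda_B$ carries the universal Buchstaber formal group law $F_B$, classified by a ring homomorphism $f_B\colon\MU^*\to\Lambda_B$; tensoring with $\mathbb{Z}[1/2]$ we get $f_B\colon\MU^*[1/2]\to\Lambda_B[1/2]$. By the result of \cite{BU-U} that $\Lambda_B$ has only $2$-torsion and $\Lambda_{\mathcal B}=\Lambda_B/\mathrm{torsion}$ is an integral domain, the map $f_B[1/2]$ lands in (and is essentially) a map onto the torsion-free $\Lambda_B[1/2]$, which is an integral domain. The key is to show $\ker(f_B[1/2])=(\Sigma)=(y_i, i\geq 5)$. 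The inclusion $\supseteq$: each $y_i$, $i\geq 5$, is a polynomial generator of $\MSU^*[1/2]$ annihilated by the Krichever complex elliptic genus (noted explicitly in the excerpt just before Theorem \ref{abelian}), and $f_B\otimes\mathbb{Q}$ coincides with the Krichever--Hoehn genus by \cite{BV}; so $f_B(y_i)$ is torsion in $\Lambda_B$ for each such $i$, hence zero after inverting $2$. For $\subseteq$ I would compare Krull dimensions / Poincaré series: by \eqref{W_*} together with the free-module statement \eqref{free}, $\MU^*[1/2]/(S)$ is a polynomial ring on the classes $x_3,x_4$ (the remaining generators of $W^*[1/2]$ after $x_1$ and the $x_i$, $i\ge 5$, are killed), so $\MU^*[1/2]/(\Sigma)$ is polynomial on $x_1,x_3,x_4$ over $\mathbb{Z}[1/2]$ with the evident grading; one then checks this graded ring has the same Poincaré series as $\Lambda_B[1/2]$, which forces the surjection $\MU^*[1/2]/(\Sigma)\twoheadrightarrow\Lambda_B[1/2]$ induced by $f_B[1/2]$ to be an isomorphism.

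The main obstacle I anticipate is precisely this last Poincaré-series (or degree-by-degree dimension) matching: one must know the graded ranks of $\Lambda_B[1/2]$ in every degree and see that the three free generators $x_1,x_3,x_4$ in degrees $2,6,8$ account for exactly them. This requires pulling in the explicit description of $\Lambda_B$ from \cite{BU-U} — in particular that after inverting $2$ it becomes a polynomial $\mathbb{Z}[1/2]$-algebra on three generators in those degrees — and cross-checking against the Buchstaber formal group law's coefficient structure. A secondary subtlety is making sure the specification of the generators $y_i$ via \eqref{234} and the quasi-toric representatives \cite{L-C-P} is compatible with the Krichever-kernel statement, i.e. that with \emph{these} choices the $y_i$ genuinely generate the kernel ideal of the elliptic genus and not merely lie in it; here one can lean on the cited Theorem 6.1 of \cite{TO} identifying that kernel with the ideal of $SU$-flops, together with the $SU$-bordism structure on the $M^{2i}$. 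Once the ring identification is in hand, assembling the statement — multiplicative, commutative, complex oriented, coefficient ring $\Lambda_B[1/2]$ — is formal.
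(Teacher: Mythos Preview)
Your overall architecture --- regularity from Theorem~\ref{regular}(ii), Baas--Sullivan, then identification of the coefficient ring with $\Lambda_B[1/2]$ via the classifying map $f_B$ --- matches the paper's, and your inclusion $(\Sigma)\subseteq\ker f_B[1/2]$ via the Krichever genus is correct. The gap is in the reverse inclusion.

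The claim that $\MU^*[1/2]/(\Sigma)$ is polynomial on $x_1,x_3,x_4$ is false, and the derivation from \eqref{W_*} and \eqref{free} is invalid: \eqref{free} relates $W^*[1/2]$ to $\MSU^*[1/2]$, not to $\MU^*[1/2]$, and $W^*$ is not even a subring of $\MU^*$, so its generator pattern does not transfer to quotients of $\MU^*$. Counting correctly, $\Sigma=(y_5,y_6,\dots)$ is regular with $|y_i|=2i$, so $\MU^*[1/2]/(\Sigma)$ has Poincar\'e series $\prod_{i=1}^{4}(1-t^{2i})^{-1}$, that of a polynomial ring on \emph{four} classes in degrees $2,4,6,8$ --- the degree-$4$ class (essentially $y_2$) is still present. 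Likewise $\Lambda_B[1/2]$ is not polynomial on three generators: rationally it sits in $\mathbb{Q}[p_1,p_2,p_3,p_4]$ by \eqref{KH}, and by Remark~\ref{Lambda generators} it needs multiplicative generators in every prime-power degree, so it is far from a three-generator polynomial ring. Hence the Poincar\'e-series matching you envisage is not between two finitely generated polynomial rings; establishing it degree by degree is tantamount to the full computation of $\Lambda_B$ carried out in \cite{BU-U}, which is exactly what you were hoping to avoid.

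The paper does not attempt a rank count. It cites \cite{B0}, where the ideal identification is done directly: Lemma~2 here gives $\ker f_B=(A_{ij},\,i,j\geq 3)$, and an inductive Chern-number argument parallel to Lemma~\ref{main lemma} shows that after inverting $2$ this ideal coincides with $(y_i,\,i\geq 5)$. That direct comparison of ideals is the missing ingredient in your plan. Your commutativity sketch is also off: the ``dimensional reasons'' in the proof of Theorem~\ref{h} work because the scalar ring there is concentrated in even complex degree, which fails for $\Lambda_B[1/2]$. What actually rescues commutativity here is that Mironov's obstructions take values in $\MU^*\otimes\mathbb{F}_2$ and hence vanish once $2$ is inverted; the paper explicitly leaves the integral commutativity of $\MU^*/(x_i,\,i\geq 5)$ open.
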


\bigskip

In Section 4 we discuss a genus on $c_1$-spherical cobordism $W^*$ 
\begin{equation}
\label{phi_W }	
\phi_{W}: W^*[1/2] \to \mathbb[Z/2][x_1,x_3,x_4].
\end{equation}

\bigskip

The following questions remains open. 

What about integral versions of Theorem \ref{abelian} and Theorem \ref{example2}? 

Is the cohomology theory  $\MU^*/(x_i, i\geq 5)$ commutative? 
The same question arises for other examples of Theorem \ref{regular} ii).

\bigskip

\section{Preliminaries}
\label{Pre}

\bigskip

\subsection{Krichever complex elliptic genus}

The Krichever complex elliptic genus (also called Krichever-H\"ohn genus) has been defined in \cite{KR} and \cite{H}.  Krichever wrote down its characteristic power series $Q(x)$ using Baker-Akhiezer function. 
In \cite{H}, H\"ohn defined four variable elliptic genus 
\begin{equation}
	\label{KH}
	\phi_{KH}:\MU^*\to \mathbb{Q}[p_1,p_2,p_3,p_4],\,\,\,|p_i=2i|.
\end{equation}
determined by the following property: if one denotes by $f$ the exponent of the corresponding formal group $F_{KH}$, then the series
$$
h(x):=\frac{f'(x)}{f(x)}
$$
satisfies the differential equation
\begin{equation}
	\label{Hoehn}
	(h'(x))^2=S(h(x)),
\end{equation}
where $S$ is the generic monic polynomial of degree 4:
$$
S(t)=p_4+p_3t+p_2t^2+p_1t^3+t^4.
$$

\bigskip

By definition \cite{BU-BU} the universal Krichever formal group law $F_{K}$ is of the form 
\begin{equation}
	\label{eq:F-Kr}
	F_K(x,y)=xw(y)+yw(x)-w'(0)xy+
	\frac{w(x)\beta(x)-w(y)\beta(y)}{xw(y)-yw(x)}x^2y^2,
\end{equation}
where 
$$w=\frac{\partial F_K(x,y)}{\partial x}(x,0)$$
is the invariant differential form of $F_K$ and 
$\beta(x)=\frac{w'(x)-w'(0)}{2x}.$

\medskip

The universal Krichever genus on $\phi_K $ on $\MU^*$ 
is the homomorphism classifying the formal group law $F_K$.

\begin{lemma}(\cite{BV}, Thm 2.3)
\label{K-KH}
 The exponent of $F_K$ satisfies \eqref{eq:F-Kr},  i.e., $\phi_K$ is a specialization of $\phi_{KH}$.
\end{lemma}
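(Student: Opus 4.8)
The exponent of $F_K$ satisfies the differential equation underlying $\phi_{KH}$, i.e., $\phi_K$ is a specialization of $\phi_{KH}$.

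The plan is to turn the explicit shape \eqref{eq:F-Kr} of $F_K$ into a differential equation for the invariant form $w$, and then to rewrite that equation as a differential equation for the logarithmic derivative $h=f'/f$ of the exponent $f$ of $F_K$. To set up, write $g$ for the logarithm of $F_K$ and $f=g^{-1}$ for its exponent; since $w$ is the invariant differential form we have $g'(x)=1/w(x)$ with $w(0)=1$, and differentiating $f(g(x))=x$ gives the basic identity $f'(y)=w(f(y))$, so that
\begin{equation*}
h(y)=\frac{f'(y)}{f(y)}=\frac{w(f(y))}{f(y)}.
\end{equation*}
In particular $h$ has a simple pole of residue $1$ at the origin, consistent with \eqref{Hoehn} being solved by $h=f'/f$ for a normalised exponent $f$.

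Next I would extract from \eqref{eq:F-Kr} the differential equation governing $w$. Its right-hand side is not manifestly associative; imposing the formal group law axioms on it --- equivalently, running the Buchstaber--Bunkova analysis \cite{BU-BU} that underlies \eqref{eq:F-Kr}, in which $\beta(x)=(w'(x)-w'(0))/(2x)$ is forced --- constrains $w$, up to four scalar parameters, to satisfy an identity
\begin{equation*}
w(u)^2\,w'(u)\,\bigl(u\,w'(u)-2\,w(u)\bigr)=p_1\,w(u)^3+p_2\,u\,w(u)^2+p_3\,u^2\,w(u)+p_4\,u^3,
\end{equation*}
where $p_1,p_2,p_3,p_4$ are polynomials in the Taylor coefficients of $w$, and hence are specific elements of the scalar ring of $F_K$ (comparing constant terms already forces $p_1=-2\,w'(0)$; one checks moreover that the identity is homogeneous with $|p_i|=2i$). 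Establishing this identity --- and verifying that the $p_i$ are precisely the four universal parameters --- is the step I expect to be the main obstacle: it is a finite but bookkeeping-heavy manipulation of the associativity relation for \eqref{eq:F-Kr}. As a sanity check, for the additive law $w\equiv 1$ and all $p_i=0$, while for the multiplicative law $F(x,y)=x+y-xy$ one has $w=1-u$ and $(p_1,p_2,p_3,p_4)=(2,1,0,0)$, and the identity holds.

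Finally I would substitute $u=f(y)$, so that $du=w(u)\,dy$ and $h=w(u)/u$. Differentiating,
\begin{equation*}
h'(y)=\frac{dh}{du}\,\frac{du}{dy}=w(u)\,\frac{u\,w'(u)-w(u)}{u^2},
\end{equation*}
and therefore
\begin{equation*}
h'(y)^2-h(y)^4=\frac{w(u)^2w'(u)^2}{u^2}-\frac{2\,w(u)^3w'(u)}{u^3}=\frac{w(u)^2\,w'(u)\,\bigl(u\,w'(u)-2\,w(u)\bigr)}{u^3}.
\end{equation*}
By the identity of the previous paragraph the numerator on the right equals $p_1w(u)^3+p_2uw(u)^2+p_3u^2w(u)+p_4u^3$, so dividing by $u^3$ and using $h=w(u)/u$ gives
\begin{equation*}
\bigl(h'(y)\bigr)^2=h(y)^4+p_1\,h(y)^3+p_2\,h(y)^2+p_3\,h(y)+p_4,
\end{equation*}
which is exactly \eqref{Hoehn} with $S(t)=t^4+p_1t^3+p_2t^2+p_3t+p_4$. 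Thus the exponent of $F_K$ satisfies the H\"ohn equation with the $p_i$ found above, so $\phi_{KH}$ of \eqref{KH}, composed with the homomorphism $\mathbb{Q}[p_1,p_2,p_3,p_4]\to(\text{scalar ring of }F_K)$ sending each $p_i$ to that element, equals the homomorphism classifying $F_K$, i.e. equals $\phi_K$. Hence $\phi_K$ is a specialization of $\phi_{KH}$, as claimed.
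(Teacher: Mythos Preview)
The paper does not prove this lemma; it is stated as a citation of \cite{BV}, Theorem~2.3, with no accompanying argument. So there is no ``paper's own proof'' to compare against --- only the external reference.

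Your proposal is a reasonable outline of how a direct proof would go, and the parts you actually carry out are correct. In particular, the substitution $u=f(y)$, $h=w(u)/u$, $du=w(u)\,dy$ and the computation
\[
h'(y)^2-h(y)^4=\frac{w(u)^2\,w'(u)\bigl(u\,w'(u)-2w(u)\bigr)}{u^3}
\]
are right, and from there the passage to \eqref{Hoehn} is immediate \emph{provided} the polynomial identity you display for $w$ holds. Your two sanity checks (additive and multiplicative laws) are also correct.

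The genuine gap is exactly where you flag it: the identity
\[
w(u)^2\,w'(u)\bigl(u\,w'(u)-2w(u)\bigr)=p_1w(u)^3+p_2\,u\,w(u)^2+p_3\,u^2\,w(u)+p_4\,u^3
\]
is not derived --- you only assert that it should follow from associativity of \eqref{eq:F-Kr} via ``bookkeeping-heavy manipulation'' and the Buchstaber--Bunkova analysis. But this identity \emph{is} the substance of the cited theorem; everything else is formal. So what you have is a correct reduction of the lemma to this identity, together with a correct deduction of \eqref{Hoehn} from it, but not a proof of the identity itself. To complete the argument you would need either to carry out that associativity computation explicitly, or to invoke \cite{BV} (or \cite{BU-BU}) for it --- at which point you are essentially citing the result, as the paper does.
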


\medskip

The universal  Buchstaber formal group law $F_B$ is defined \cite{B1} as a universal example among formal group laws of the form 
$$
\frac{x^2A(y)-y^2A(x)}{xB(y)-yB(x)},
$$ 
where $A(0)=B(0)=1$.

If $A'(0)=B'(0)$ then $B(x)$ is identical to the image of $\omega(x)$ under the  classifying map $f_B$.

\medskip

One can suggest kernel of $\phi_K$ directly in terms of the universal formal group law $F_U$ as follows \cite{B},\cite{B1}

\begin{lemma}
Let $A(x,y)=\sum A_{ij}x^iy^j$ be defined by
$$A(x,y)=F_U(x,y)(xw(y)-yw(x)), \,\, \text{\,\, where \,\,\,}
w(x)=\frac{\partial F_U(x,y)}{\partial x}(x,0) $$
is the invariant differential form of $F_U$. Then $F_k$ is identical to $F_B$ and the classifying map $f_B=f_K$ is the obvious quotient map  
$$\MU^*\to \MU^*/(A_{ij}, i,j\geq 3).$$
\end{lemma}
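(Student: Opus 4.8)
The plan is to prove the ring identity $\MU^*/(A_{ij}\colon i,j\ge 3)\cong\Lambda_B$ with $f_B$ the quotient map, and then to identify $F_K$ with $F_B$ so that $f_K=f_B$. I would begin by recording the symmetry of the two‑variable series $A(x,y)$. Since $F_U$ is symmetric while $xw(y)-yw(x)$ is antisymmetric, $A(x,y)=-A(y,x)$; since $F_U(x,0)=x$ and $w(0)=1$ (taking $w$ to be the genuine invariant differential, so also $w_k=\alpha_{k1}$ for $k\ge 1$), we get $A(x,0)=x^2$, hence $A_{i0}=\delta_{i,2}$, $A_{0j}=-\delta_{j,2}$ and $A_{ii}=0$. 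The linchpin is that $A_{1j}=0$ for every $j\ne 2$: expanding $A=F_U\cdot(xw(y)-yw(x))$, the coefficient of $x^1y^j$ ($j\ge 3$) is $w_{j-1}-\alpha_{1,j-1}=\alpha_{j-1,1}-\alpha_{1,j-1}$, which vanishes by symmetry of $F_U$. Therefore
$$A(x,y)=x^2\mathcal{A}(y)-y^2\mathcal{A}(x)+\sum_{i,j\ge 3}A_{ij}\,x^iy^j,\qquad \mathcal{A}(t):=\sum_{j\ge0}A_{2j}\,t^j,\ \ \mathcal{A}(0)=1,$$
so that, writing $I=(A_{ij}\colon i,j\ge 3)$, the image $\bar F$ of $F_U$ in $\MU^*/I[[x,y]]$ satisfies
$$\bar F(x,y)=\frac{x^2\mathcal{A}(y)-y^2\mathcal{A}(x)}{x\,w(y)-y\,w(x)},$$
which is a formal group law over $\MU^*/I$ of Buchstaber type, with both structure series normalized to $1$ at the origin.

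Next I would run the universal property in both directions. The law $\bar F$ over $\MU^*/I$ together with the defining universal property of $(\Lambda_B,F_B)$ among Buchstaber‑type laws gives a ring map $g\colon\Lambda_B\to\MU^*/I$ sending $F_B$ to $\bar F$. Conversely, let $f_B\colon\MU^*\to\Lambda_B$ classify $F_B$ (Quillen's theorem for $F_U$). Over $\Lambda_B$, after the normalization $A'(0)=B'(0)$ recalled just before the Lemma, the invariant differential of $F_B$ is $B$, so $f_B(A(x,y))=F_B(x,y)\bigl(xB(y)-yB(x)\bigr)=x^2A(y)-y^2A(x)$, whose $x^iy^j$‑coefficient vanishes for $i,j\ge 3$; hence $f_B(A_{ij})=0$ for $i,j\ge 3$, i.e.\ $I\subseteq\ker f_B$, and $f_B$ factors through $\bar f_B\colon\MU^*/I\to\Lambda_B$. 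Now $\bar f_B\circ g$ fixes $F_B$, hence equals $\mathrm{id}_{\Lambda_B}$, while $g\circ\bar f_B$ precomposed with the quotient $q\colon\MU^*\to\MU^*/I$ classifies $\bar F$ and is therefore $q$ itself by Quillen uniqueness, so $g\circ\bar f_B=\mathrm{id}$. Thus $\bar f_B$ is an isomorphism and $f_B$ is exactly the quotient map $\MU^*\to\MU^*/(A_{ij}\colon i,j\ge 3)$.

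It remains to see $F_K=F_B$, whence $f_K=f_B$. Multiplying \eqref{eq:F-Kr} by $xw(y)-yw(x)$ and substituting $\beta(x)=(w'(x)-w'(0))/2x$, the right‑hand side collapses to $x^2\mathcal{A}(y)-y^2\mathcal{A}(x)$ with $\mathcal{A}(t)=w(t)\bigl(w(t)-\tfrac{t}{2}(w'(t)+w'(0))\bigr)$; since the invariant differential of $F_K$ is $w$ one computes $\mathcal{A}'(0)=w'(0)$, so $F_K$ is of Buchstaber type in the canonical normalization, and conversely the computation reverses, reading $\beta$ off from the quotient $\mathcal{A}/w$, so every canonically normalized Buchstaber‑type law has the shape \eqref{eq:F-Kr}. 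By universality the two universal laws coincide, $F_K=F_B$, and therefore $f_K=f_B$ is the quotient map.

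I expect the main obstacle to be the last step: matching \eqref{eq:F-Kr} with the $A,B$ form tightly enough to conclude that the two \emph{universal} objects agree — in particular verifying that the normalization $A'(0)=B'(0)$ is automatic on the Krichever side and that recovering $(w,\beta)$ from $(A,B)$ imposes no additional relation — which is where one leans on the identifications already made in \cite{B}, \cite{B1}, \cite{BU-BU}, \cite{BV}. The structural vanishing $A_{1j}=0$ for $j\ne 2$ is elementary but indispensable, since it is precisely what makes the reduction modulo $(A_{ij}\colon i,j\ge 3)$ land in Buchstaber form.
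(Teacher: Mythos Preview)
The paper does not supply its own proof of this lemma; it is quoted from the author's earlier work \cite{B}, \cite{B1}, so there is no in-paper argument to compare against. That said, your outline is essentially the correct strategy and matches what is done in those references.

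Your reduction of $A(x,y)$ modulo $(A_{ij}:i,j\ge3)$ is clean and correct: the antisymmetry, the computation $A(x,0)=x^2$, and especially the key vanishing $A_{1j}=0$ for $j\neq 2$ (which, as you note, is just $w_{j-1}=\alpha_{1,j-1}$) are exactly what forces the quotient law into Buchstaber shape with $B=w$ and $A=\mathcal A$. The two-way universal-property argument identifying $\MU^*/I\cong\Lambda_B$ is standard and sound, provided one uses the normalized presentation $A'(0)=B'(0)$ so that $B$ really is the invariant differential on the $\Lambda_B$ side; this normalization is always achievable and costs nothing.

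Where your sketch is thin is precisely where you flag it. Your computation $\mathcal A(t)=w(t)\bigl(w(t)-\tfrac{t}{2}(w'(t)+w'(0))\bigr)$ correctly shows $F_K$ is of Buchstaber type with the right normalization. But the converse ``reading $\beta$ off from $\mathcal A/w$'' only shows that a normalized Buchstaber law has the shape \eqref{eq:F-Kr} for \emph{some} power series $\beta$; the Krichever definition in the paper fixes $\beta(x)=(w'(x)-w'(0))/2x$. Equating the two amounts to proving that in any Buchstaber law with $A'(0)=B'(0)$ one has $A(t)=B(t)\bigl(B(t)-\tfrac{t}{2}(B'(t)+B'(0))\bigr)$, i.e.\ $A$ is forced by $B$ via associativity. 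That identity is the substantive content of \cite{B1} (and is also derivable from the Nadiradze presentation), so deferring to those references here is appropriate rather than a genuine gap.
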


\begin{remark}
\label{Lambda generators}
As shown in \cite{BU-U}, all torsion elements of $\Lambda_B$, the scalar ring of $F_B$ have order two. Furthermore,
there is a set of multiplicative generators for $\Lambda_B$ with one generator in every
degree of the form $p^k$, where $p$ is a prime and 
$k \geq 0$, and one generator in every degree 
$2^n-2$, where $n\geq 3$; the latter generators generate the
torsion subring.
\end{remark}

\bigskip

\subsection{$c_1$-spherical cobordism}

in \cite{C-F} Conner and floyd introduced the groups $W^{2n}\subset \MU^{2n}$.  A complex cobordism class belongs to $W^{2n}$ if and only if every Chern number involving $c_1^2$ vanishes.

Recall the idempotent in \cite{BU-1}, \cite{C-P} 
$$\pi_0:\MU^*\to \MU^* : \,\,\,  \pi_0=1 + \sum_{k \ge 2} \alpha_{1k} \partial_k$$
and the projection $\pi_0:\MU^*\to W^*=Im \pi_0$.  

 By \cite{C-P} Proposition 2.15  the multiplication $*$ in $W^*$ is given by
$$
a*b = \pi_0(a\cdot b)=ab + 2[V]\partial a \partial b, 
$$
where $[V]=\alpha_{12}\in\MU^4$ is the cobordism class $\CP_1^2 - \CP_2$.

Then $W^*$ is a subgroup, not a subring in $\MU^*$. With respect to the multiplication $*$ 
the ring $W^*$ is polynomial on generators in
every positive even degree except 4 as in \eqref{W_*}.

 \bigskip
 
 \subsection{The Abel formal group law}
 
By definition \cite{B-KH} the universal Abel formal group law formal group is the universal example of the formal group laws of the form 
$$
F_{Ab}=x+y+\alpha_{11}xy+\sum_{i\geq 2} \alpha_{1i}(xy^i+x^iy).
$$

By Quillen \cite{Q} and Novikov \cite{NOV} the scalar ring of the universal Lazard formal group law is identical to $\MU^*$, the ring of complex cobordism. So to put it another way $F_{Ab}$ is classified from the universal formal group law $F_{U}$ over $\MU^*$
$$
F_U=x+y+\sum \alpha_{ij}x^iy^j
$$ 
by the quotient of $\MU^*$ modulo the ideal $I_{Ab}$ generated by all coefficients $\alpha_{ij}\in \MU^*$, but only $\alpha_{1i}=\alpha_{i1}$
\begin{equation}
\label{I_{Ab}}
I_{Ab}=(\alpha_{ij}: i,j\geq 2).
\end{equation}

Buchstaber named the group law $F_{Ab}$ after Abel, motivated by Abel's article: "Abel N. H. Methode generale pour trouver des fonctions d'une seule quantite variable
lorsqu'une propriete de ces fonctions est exprimee par une equation entre deux variables oeuvres completes, I. Christiania, 1881. p. 1-10."

In that article, Abel gave a general solution of a functional equation for the exponent series of $F_{Ab}$.

\medskip

\begin{remark}
\label{LambdaAB generators}
For our purposes, let us recall from \cite{B-KH} that the scalar ring $F_{Ab}$, i.e., 
$$\Lambda_{Ab}=\MU^*/I_{Ab}$$
is non polynomial infinitely generated ring. There is a set of multiplicative generators for $\Lambda_{Ab}\otimes \mathbb{F}_2$ with one generator in every
complex degree $2^k$, $k\geq 0$. 
 
$\Lambda_{Ab}$ is an integral domain. After tensored with rational numbers it is a subring of the polynomial ring in two parameters of degree 2 and 4, i.e., 
\begin{equation}
 	\label{Lambda-rationally}
 	\Lambda_{Ab}\otimes \mathbb{Q}\subset \mathbb{Q}[\alpha, \beta], \,\,\,|\alpha|=2, |\beta|=4.
 \end{equation}
 \end{remark}
 
 \bigskip

 in \cite{BUSATO} proved that $F_{Ab}$ can be realized by commutative complex oriented cohomology theory $h^*_{Ab}$ also called after Abel.
 
 \medskip
 
 We will also need 
 
 \begin{lemma}
 	\label{specialization}
 	The classifying map $f_{Ab}:\MU^*\to\Lambda_{Ab}$ followed by  $\Lambda_{Ab}\otimes \mathbb{Q}$ in \label{Lambda-rationally} is a specialization of the Krichever-Hoehn complex elliptic genus $\phi_{KH}$. 	
 \end{lemma}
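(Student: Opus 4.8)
The plan is to factor the Abel classifying map through the Buchstaber one, and then to invoke Lemma \ref{K-KH}. Recall that $\ker f_{Ab} = I_{Ab} = (\alpha_{ij}\colon i,j\ge 2)$ by \eqref{I_{Ab}}, while the unnumbered Lemma recalled above identifies $\ker f_B = (A_{ij}\colon i,j\ge 3)$, where $A(x,y) = F_U(x,y)\bigl(xw(y) - yw(x)\bigr)$ and $w$ is the invariant differential form of $F_U$. So it suffices to prove the inclusion $\ker f_B \subseteq I_{Ab}$: this produces a ring surjection $\Lambda_B \twoheadrightarrow \Lambda_{Ab}$ through which $f_{Ab}$ factors, i.e.\ $f_{Ab}$ is a specialization of $f_B$.

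To prove that inclusion I would work modulo $I_{Ab}$. There $F_U$ becomes $F_{Ab}$, and $w(x)$ becomes $B(x) := 1 + \sum_{i\ge 1}\alpha_{1i}x^i$, since $\partial_y F_{Ab}(x,0) = B(x)$; moreover one has the closed form $F_{Ab}(x,y) = xB(y) + yB(x) - B'(0)\,xy$. Hence, modulo $I_{Ab}$,
\begin{equation*}
A(x,y) \equiv \bigl(xB(y) + yB(x) - B'(0)\,xy\bigr)\bigl(xB(y) - yB(x)\bigr) = x^2 B(y)^2 - y^2 B(x)^2 - B'(0)\,xy\bigl(xB(y) - yB(x)\bigr).
\end{equation*}
Every monomial on the right-hand side has $x$-degree at most $2$ or $y$-degree at most $2$, so its coefficient at $x^i y^j$ vanishes as soon as $i\ge 3$ and $j\ge 3$; therefore $A_{ij}\in I_{Ab}$ for all $i,j\ge 3$, which is exactly $\ker f_B\subseteq I_{Ab}$. (Reducing the coefficient polynomials $A_{ij}$ modulo $I_{Ab}$ is the same as forming $A$ directly from $F_{Ab}$, since $w$ and the $A_{ij}$ depend polynomially on the $\alpha_{ij}$.)

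It remains to rationalize. By the unnumbered Lemma recalled above, $F_B = F_K$ and $f_B = f_K$, and by Lemma \ref{K-KH} the genus $\phi_K = f_K$ becomes, after $\otimes\mathbb{Q}$, a specialization of $\phi_{KH}$; that is, $f_B\otimes\mathbb{Q}$ factors through $\phi_{KH}$. Composing with the surjection $\Lambda_B\otimes\mathbb{Q}\to\Lambda_{Ab}\otimes\mathbb{Q}$ from the previous step, $f_{Ab}\otimes\mathbb{Q}$ factors through $\phi_{KH}$ as well; post-composing with the embedding $\Lambda_{Ab}\otimes\mathbb{Q}\hookrightarrow\mathbb{Q}[\alpha,\beta]$ of \eqref{Lambda-rationally} changes nothing. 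Thus $f_{Ab}$ followed by \eqref{Lambda-rationally} is a specialization of $\phi_{KH}$.

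The main obstacle is the bidegree computation in the middle paragraph: one must pin down $A$ modulo the Abel ideal in closed form and verify that no monomial $x^i y^j$ with $i,j\ge 3$ survives — everything else is a formal composition of specializations. Alternatively one could bypass $F_B$ and verify directly that the exponent $f$ of the rational Abel law makes $h = f'/f$ satisfy the H\"ohn equation \eqref{Hoehn} for suitable $p_i\in\Lambda_{Ab}\otimes\mathbb{Q}$ — essentially Abel's functional equation for $f$ — but the route through $F_B$ uses only facts already recorded in the text.
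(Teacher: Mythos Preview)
Your proof is correct and follows the same strategy as the paper: show that $F_{Ab}$ is a specialization of $F_K$ (equivalently $F_B$) and then invoke Lemma~\ref{K-KH}. The paper's execution is more direct---rather than routing through the Buchstaber description and checking bidegrees in $A(x,y)$ modulo $I_{Ab}$, it simply reads off from the explicit Krichever formula~\eqref{eq:F-Kr} that the $\beta$-term vanishes for $F_{Ab}$, leaving $F_{Ab}(x,y)=xw(y)+yw(x)-w'(0)xy$ with $w(x)=1+\sum_{i\ge 1}\alpha_{1i}x^i$, so the specialization is immediate; the alternative via the H\"ohn equation~\eqref{Hoehn} that you mention is also noted in the paper and left to the reader.
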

 
 \begin{proof}
 	A direct proof of this fact could include a verification of the defining property \eqref{eq:F-Kr} of the Krichever-H\"ohn genus in case of $F_{Ab}$. 
 Leaving this to the reader, we derive the lemma from the fact that $F_{Ab}$ is obviously a specialization of $F_K$ as $w(x)=1+\sum_{\geq 1}\alpha_{1i}x^i$ and $\phi_K$ is a specialization of $\phi_{KH}$ by Lemma \ref{K-KH}.
 	
 \end{proof}

 \medskip
 
 \subsection{Some auxiliary combinatorial definitions}

Before we continue about the construction of the Abel cohomology, we need to recall some combinatorial facts.

By Euclid's algorithm for the natural numbers $m_1,m_2, \cdots, m_k$  one can find integers $\lambda_1, \lambda_2, \cdots \lambda_k $ such that 
\begin{equation}
	\label{lambda}
	\lambda_1m_1+\lambda_2m_2+\cdots  +\lambda_km_k=gcd(m_1,m_2,\cdots ,m_k).
\end{equation}

Let 
\begin{equation}
	\label{d}
	d(m)=gcd \bigg \{ \binom{m+1}{1},\binom{m+1}{2},\cdots ,\binom{m+1}{m-1}   \big|\,\, m\geq 1 \bigg \}.
\end{equation}

By \cite{KU} one has 
\begin{equation*}
	d(m)=
	\begin{cases} p, &\mbox{if $m+1=p^s$ for some prime  $p$,}\\
		1, &\mbox{otherwise}
	\end{cases}
\end{equation*}
therefore the elements
\begin{equation}
	\label{e_n}
	e_{m}=\lambda_1\alpha_{1\,m}+\lambda_2\alpha_{2\,m-1}+\cdots +\lambda_m\alpha_{m\,1}	
\end{equation}  
may be chosen as the polynomial generators in $\MU^*$.

Let $m\geq 4$ and let $\lambda_2,\cdots ,\lambda_{m-2}$ are such integers that

\begin{equation}
	\label{d_2}
	d_2(m):=\sum_{i=2}^{m-2}\lambda_i\binom{m+1}{i}=gcd \bigg \{ \binom{m+1}{2},\cdots ,\binom{m+1}{m-2}  \bigg \}.
\end{equation}

Then by \cite{BU-U} Lemma 9.7 one has for $m\geq 3$

\begin{align} 
	\label{d_2d_1} 
	&d_2(m)=d(m)d(m-1).	
\end{align}

Let us apply Euclid's algorithm for the Chern numbers $$s_{m-1}(\alpha_{i,m-i})=\binom{m+1}{i}$$
in \eqref{d_2} and let
$$z_{k}=\sum_{i=2}^{k-1}\lambda_i\alpha_{i\, k+1-i},\,\,\,k\geq 3. $$   

Let us recall from \cite{BUSATO} the constriction of abelian cohomology $h_{Ab}^*(-)$.   

\begin{lemma}
\label{I=I_{Ab}}
Let $I_{Ab}$ be as in \eqref{I_{Ab}} and  $I_{Ab}(l)\subset I_{Ab}$ be generated by  those elements of $\MU^*$ whose degree is less or equal $2l$. Let $I=(z_3, \cdots)$ and $I(l)=(z_3,\cdots, z_l )$. Then 

i) $I_{Ab}=I$;

ii) $I_{Ab}(l)=I(l)$.
\end{lemma}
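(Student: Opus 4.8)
The plan is to prove both statements simultaneously by showing that the ideals $I_{Ab}$ and $I = (z_3, z_4, \dots)$ coincide, and that this equality respects the filtration by degree. First I would recall that $I_{Ab} = (\alpha_{ij} : i,j \geq 2)$ is generated by all coefficients of $F_U$ that are not of the "linear" type $\alpha_{1i}$, and that the elements $z_k = \sum_{i=2}^{k-1} \lambda_i \alpha_{i,\,k+1-i}$ with $k \geq 3$ are particular $\mathbb{Z}$-linear combinations of the $\alpha_{i,k+1-i}$ with $i,j \geq 2$. Hence the containment $I \subseteq I_{Ab}$ is immediate, and $I(l) \subseteq I_{Ab}(l)$ is immediate as well since each $z_k$ with $k \leq l$ has degree $2k \leq 2l$.

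For the reverse containment, the key point is that modulo $I$ (respectively $I(l)$) one can inductively express every $\alpha_{ij}$ with $i,j\geq 2$ (of degree $\le 2l$) in terms of the $z_k$. I would argue by induction on the degree $2m = 2(i+j)$. The crucial input is formula \eqref{d_2d_1}: the greatest common divisor of $\binom{m+1}{2}, \dots, \binom{m+1}{m-2}$ equals $d(m)d(m-1)$, which by the cited Theorem of \cite{L-C-P} (and the specification of the $x_k$) is exactly $s_m$ of a polynomial generator. So $z_m$ has top Chern number $s_m(z_m) = d_2(m) = d(m)d(m-1)$, meaning that $z_m$ together with the already-constructed generators in lower degrees, and together with the $\alpha_{1i}$, forms a system of polynomial generators of $\MU^*$ through degree $2m$. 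One then checks that the complementary statement holds: any $\alpha_{ij}$ with $i,j\geq 2$, being decomposable relative to the "$c_1$-part" $\{\alpha_{1i}\}$ in the appropriate sense, lies in the ideal generated by $\{z_k\}$; this is precisely the content of \cite{BUSATO} that we are invoking, so I would reproduce their argument: reduce $\alpha_{ij}$ modulo $z_{i+j}$ to kill its leading term, then the remainder is decomposable and one applies the inductive hypothesis together with the fact that products of generators — at least one of which is some $z_k$ or some $\alpha_{ab}$ with $a,b\ge 2$ — remain in $I$.

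The main obstacle I anticipate is bookkeeping the induction so that the degree bound is preserved in part (ii): when one rewrites a decomposable element $\alpha_{ij} - (\text{unit})\cdot z_{i+j}$ as a sum of products, one must ensure each product that is claimed to lie in $I(l)$ genuinely involves a factor $z_k$ with $k \leq l$ (equivalently, $k \leq i+j$), rather than some $z_k$ of too-high degree sneaking in. This is guaranteed because any product contributing to a degree-$2m$ element of $I_{Ab}$ has all factors of degree $\le 2m$, but one needs to state this carefully and make sure the linear change of generators from $\{\alpha_{ij}, i,j\ge 2\}$ to $\{z_k\}$ (together with decomposables) is genuinely triangular with respect to degree. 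Once that triangularity is set up, both $I_{Ab} \subseteq I$ and $I_{Ab}(l) \subseteq I(l)$ follow by the same induction, completing the proof.

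I would close by remarking that an alternative, slightly cleaner route is to observe that $I_{Ab}$ is the kernel of the classifying map for $F_{Ab}$ and that modulo $I$ the formal group law $F_U$ acquires exactly the shape $x + y + \alpha_{11}xy + \sum_{i\ge 2}\alpha_{1i}(xy^i + x^iy)$ defining $F_{Ab}$ — this gives $I_{Ab}\subseteq I$ directly from universality — but this shortcut does not by itself yield the filtered statement (ii), so the inductive argument above is still needed for the refinement.
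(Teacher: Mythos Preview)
The paper does not give its own proof of this lemma: it is stated under the heading ``Let us recall from \cite{BUSATO}\ldots'' and no argument is supplied. So there is nothing in the present paper to compare your attempt against; the lemma is simply quoted from Busato.

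Regarding your sketch on its own merits: the inclusion $I(l)\subseteq I_{Ab}(l)$ is fine, and the overall strategy for the reverse inclusion---subtract a suitable multiple of $z_k$ from $\alpha_{ij}$ and induct on degree---is the natural one. However, the inductive step as you have written it does not close. Two specific issues. First, a bookkeeping point: $\alpha_{ij}$ has degree $2(i+j-1)$, so the relevant element is $z_{i+j-1}$, not $z_{i+j}$; and $s_k(z_k)=d(k)d(k-1)$ is in general a proper multiple of $d(k)$, so $z_k$ is \emph{not} a polynomial generator of $\MU^*$ (only $d(k-1)$ times one, modulo decomposables). Second, and more seriously, after writing $R:=\alpha_{ij}-c\,z_k$ as a decomposable element lying in $I_{Ab}$, you assert that $R$ is a sum of products ``at least one of which is some $z_m$ or some $\alpha_{ab}$ with $a,b\ge 2$''. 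This is exactly what has to be proved. A decomposable element of $I_{Ab}$ in degree $2k$, written in the generators $e_1,\dots,e_{k-1}$, has no obvious factor in $I_{Ab}$; and if instead you expand $R=\sum f_{a'b'}\alpha_{a'b'}$ with $a',b'\ge 2$, the terms with $a'+b'-1=k$ have constant coefficients and you are led straight back to the same problem in the same degree. Closing this loop is where the actual content of Busato's argument lies---one needs either an explicit control of the decomposable part of $\alpha_{ij}$ via the associativity relations in the Lazard ring, or a degree-by-degree size comparison between $\MU^*/I$ and $\Lambda_{Ab}$. Your ``alternative, slightly cleaner route'' at the end is, as you yourself observe, circular for (i) and does not address (ii).
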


Let's put $\MU^*=\mathbb{Z}[e_1,e_2, \cdots]$ for $e_m$ in \eqref{e_n} and let
$$A_l=\mathbb{Z}[e_1,e_2,\cdots , e_l] \text{ and } 
A^{l+1}=\mathbb{Z}[e_{l+1}, e_{l+2}, \cdots],
$$ 
i.e., 
$$\MU^*=A_l\otimes A^{l+1}.$$   

The preimage of $I(l)$ by obvious inclusion defines the ideal of $A_l$ denoted by same symbol so that 
\begin{equation}
\label{l}
\MU^*/I(l)=A_l/I(l)\otimes A^{l+1}.
\end{equation}

\begin{remark}
\label{I(l)}
We note here that by definition $I(l)$ is generated by those polynomials in 
$\mathbb{Z}[e_1,e_2, \cdots , e_l]$ that are in the
kernel of the quotient map $\MU^* \to \MU_*/I_{Ab}$.
\end{remark}

\medskip

The construction of abelian cohomology is based on the following key result \cite{B-KH}, \cite{BUSATO}.
 
\begin{lemma}
\label{regular-J}
	The ideals $I$ and  $I(l)$ are prime, 
	or equivalently   $\MU^*/I$ and $\MU^*/I(l)$ are integral domains.
  \end{lemma}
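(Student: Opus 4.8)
The plan is to reduce both primality assertions to a single structural input recorded in Remark \ref{LambdaAB generators}: that $\Lambda_{Ab}=\MU^*/I_{Ab}$ is an integral domain (which, if one prefers a hands-on justification, follows from the embedding of $\Lambda_{Ab}\otimes\mathbb{Q}$ into the polynomial domain $\mathbb{Q}[\alpha,\beta]$ of \eqref{Lambda-rationally} together with the torsion freeness of $\Lambda_{Ab}$). Everything else is elementary commutative algebra about subrings and polynomial extensions of domains, organized around the identifications already set up before the statement.

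For the ideal $I$ this is immediate: by the first part of Lemma \ref{I=I_{Ab}} one has $I=I_{Ab}$, hence $\MU^*/I=\MU^*/I_{Ab}=\Lambda_{Ab}$ is an integral domain, so $I$ is prime.

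For the ideal $I(l)$ I would use the decomposition \eqref{l}, namely $\MU^*/I(l)=\bigl(A_l/I(l)\bigr)\otimes A^{l+1}$. Since $A^{l+1}=\mathbb{Z}[e_{l+1},e_{l+2},\dots]$ is a polynomial ring over $\mathbb{Z}$, this exhibits $\MU^*/I(l)=\bigl(A_l/I(l)\bigr)[e_{l+1},e_{l+2},\dots]$ as a polynomial ring, in countably many variables, over $A_l/I(l)$, and such a ring is an integral domain as soon as its coefficient ring is. So it suffices to show that $A_l/I(l)$ is a domain. Here I invoke Remark \ref{I(l)}: the ideal $I(l)\subset A_l$ is generated by the polynomials in $e_1,\dots,e_l$ lying in $\ker(\MU^*\to\Lambda_{Ab})$, and since that generating set is already the ideal $A_l\cap\ker(\MU^*\to\Lambda_{Ab})$ of $A_l$, the inclusion $A_l\hookrightarrow\MU^*$ descends to an injection of rings $A_l/I(l)\hookrightarrow\Lambda_{Ab}$. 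A subring of an integral domain is an integral domain, so $A_l/I(l)$, and therefore $\MU^*/I(l)$, is a domain; that is, $I(l)$ is prime. (As a cross-check, the primality of $I$ can be recovered afterwards from that of the $I(l)$, since $I=\bigcup_l I(l)$ and a sequential colimit of integral domains, along ring homomorphisms, is again an integral domain.)

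Given the cited results there is no real obstacle in any of these steps; the only genuinely substantial ingredient is the domain property of $\Lambda_{Ab}$, which is imported from \cite{B-KH}. If one insisted on a self-contained argument, that is where the work would lie — deriving, from Abel's solution of the functional equation for the exponent of $F_{Ab}$, both the rational embedding \eqref{Lambda-rationally} and the vanishing of torsion in $\Lambda_{Ab}$ — after which the bookkeeping above goes through verbatim.
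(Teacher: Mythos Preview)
Your argument is sound, but you should be aware that the paper does not actually prove this lemma: it is stated as a key result imported from \cite{B-KH} and \cite{BUSATO}, with no argument given in the text. So there is no ``paper's own proof'' to compare against; your write-up supplies precisely the reduction that the citations stand in for.

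What you do is the natural unpacking of those references. The only substantive input is that $\Lambda_{Ab}=\MU^*/I_{Ab}$ is an integral domain, which is the content of \cite{B-KH} recorded in Remark~\ref{LambdaAB generators}; everything else is the bookkeeping in \eqref{l} and Remark~\ref{I(l)}. Your reading of Remark~\ref{I(l)} as asserting $I(l)_{A_l}=A_l\cap I_{Ab}$ is correct (the generating set $A_l\cap I_{Ab}$ is already an ideal of $A_l$), and from there the embedding $A_l/I(l)\hookrightarrow\Lambda_{Ab}$ and the polynomial-extension step are routine. The one caveat is that the paper asserts Remark~\ref{I(l)} ``by definition'' without justification; strictly speaking it requires the contraction-of-extension identity for the polynomial inclusion $A_l\hookrightarrow A_l[e_{l+1},e_{l+2},\dots]$ together with Lemma~\ref{I=I_{Ab}}, which you implicitly use. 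If you want your proof to be fully self-contained modulo \cite{B-KH}, a one-line remark that $I(l)_{A_l}=(z_3,\dots,z_l)_{A_l}$ coincides with the contraction $A_l\cap I_{Ab}$ would close that small gap.
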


Using Baas-Sullivan theory the commutative complex oriented cohomology  $h_{Ab}^*(-)$ with coefficient ring $\MU^*/I$  was defined in \cite{BUSATO}. 

\medskip

\begin{remark}
\label{remark regular}
We recall that for a commutative ring $R$, a regular sequence  is a sequence $(r_1, \cdots, r_n)$ in $R$ such that $r_i$ is not a zero-divisor on $R/(r_1, \cdots, r_{i-1}$, for $i=1, \cdots,  n$. 
	
If $R$ is a graded ring and the members of a regular sequence are homogeneous of positive degree, then any permutation of a regular sequence is  regular \cite {BRUNS}. Therefore we can regard our regular sequences as regular sets.
\end{remark}

%
%

\medskip

\section{Proofs}

\begin{lemma} 
\label{main lemma}

Let $J_W$ be the ideal in $\MU^*$ generated by $y_2=w_1*w_1=8\CP_2-9\CP_1^2$ and polynomial generators  $(x_i, i \geq 3)$ of $W^*$  as elements in $\MU_*$ by group inclusion and $J_W(l)$ be $(y_2,x_3, \cdots , x_l)$. 
Let $I$ and $I(l)$ be the ideals in  
Lemma \ref{I=I_{Ab}}, 
$\tilde{I}=(y_2,I)$ and $\tilde{I}(l)=(y_2,I(l))$.  Then

i) $J_W(l)=\tilde{I}(l)$, $\forall l\geq 3$;

ii) The sequence $y_2,x_3, \cdots $ is regular in $\MU^*$. 	
\end{lemma}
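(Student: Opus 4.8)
The plan is to prove (i) by induction on $l$ and to derive (ii) from (i) together with the properties of the Abel construction, so let me record the reduction for (ii) first. By Lemma~\ref{regular-J} each $\MU^*/I(m)=\MU^*/(z_3,\dots,z_m)$ is an integral domain, and $z_{m+1}$ is nonzero in it because $s_{m+1}(z_{m+1})=\pm d(m+1)d(m)\neq0$ while $s_{m+1}$ annihilates every $\MU^*$-decomposable, hence the degree-$2(m+1)$ part of $I(m)$; thus $z_{m+1}$ is a non-zero-divisor modulo $I(m)$ and $(z_3,z_4,\dots)$ is regular. Since $y_2$ has degree $4$, below the degree $6$ of the lowest generator of any $I(l)$, it is nonzero in the domain $\MU^*/I(l)$ and therefore a non-zero-divisor there; so $(z_3,\dots,z_l,y_2)$ is regular, and by Remark~\ref{remark regular} so is $(y_2,z_3,\dots,z_l)$. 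Finally one transfers this along (i): the chain $(y_2)\subset(y_2,x_3)\subset\dots\subset(y_2,x_3,\dots,x_l)$ coincides term by term with $(y_2)\subset(y_2,z_3)\subset\dots\subset(y_2,z_3,\dots,z_l)$, and a regular sequence of homogeneous positive-degree elements stays regular when its $m$-th term is replaced by any element generating the same ideal modulo the first $m-1$ (over a ring on which the original term is a non-zero-divisor the two differ by a unit). Hence $(y_2,x_3,\dots,x_l)$ is regular for all $l$, i.e. $y_2,x_3,x_4,\dots$ is regular in $\MU^*$.

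For (i) I would induct on $l$, the previous terms furnishing the trivial start $\tilde I(2)=(y_2)=J_W(2)$. Assume $J_W(l-1)=\tilde I(l-1)$ and set $R=\MU^*/\tilde I(l-1)$. By Lemma~\ref{I=I_{Ab}}, $I(l)=(z_3,\dots,z_l)$, so $\tilde I(l)/\tilde I(l-1)$ is generated by the class $\bar z_l\in R^{2l}$ while $J_W(l)/\tilde I(l-1)$ is generated by $\bar x_l$; it therefore suffices to show $(\bar x_l)=(\bar z_l)$ in $R$. Both $s_l(x_l)$ and $s_l(z_l)$ equal, up to sign, the minimal value $d(l)d(l-1)$ (for $z_l$ this is the content of \eqref{d_2}--\eqref{d_2d_1}), so after a sign change $x_l-z_l$ is a $\MU^*$-decomposable class of degree $2l$. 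Moreover, by \eqref{free} together with the relation $[\CP_1]\ast[\CP_1]=y_2$, the ring $W^*$ is — at least after inverting $2$ — the polynomial ring on $[\CP_1]$ and on generators $x_k$, $k\ge3$, which may be taken to be the $SU$-generators $y_k$ of \eqref{234} and its higher analogues; since $y_3=-\alpha_{22}=-z_3$, the identification is transparent in the first step. The remaining task is then to show that the decomposable correction $x_l-z_l$ already lies in $\tilde I(l)$: I would expand it as a sum of products of lower-degree classes, absorb the summands divisible by $[\CP_1]$ using $y_2=[\CP_1]\ast[\CP_1]$ (so that $[\CP_1]^2$-multiples are congruent modulo $\tilde I(l)$ to $y_2$-multiples) and the remaining summands using the inductive identities $J_W(m)=\tilde I(m)$ for $m<l$. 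This yields $\bar x_l=\pm\bar z_l$ in $R$, hence $J_W(l)=\tilde I(l)$, and closes the induction.

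The hard part is precisely this last step of (i): controlling the decomposable error $x_l-z_l$ modulo $\tilde I(l-1)$ \emph{over $\mathbb{Z}$}. The $s_l$-indecomposable parts of $x_l$ and $z_l$ are pinned down by the single Milnor number $s_l$ and agree up to sign, but degree-$2l$ decomposables do not lie in $\tilde I(l)$ for free, so one must choose the polynomial generators $x_l$ of $W^*$ carefully and keep precise track of Chern numbers — this is where the $c_1^2$-defining conditions for $W^*$, the explicit form $y_2=8\CP_2-9\CP_1^2$, and the combinatorial identities around \eqref{d_2d_1} enter. After inverting $2$ the identity $J_W(l)[1/2]=\tilde I(l)[1/2]$ is essentially forced by \eqref{free}; establishing the integral statement is the technical core.
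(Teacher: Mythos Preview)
Your derivation of (ii) from (i) is correct and in fact cleaner than the paper's one-line appeal to primeness: the observation that equal chains of ideals force $\bar x_m$ to be a unit multiple of $\bar z_m$ in each successive quotient is exactly right.

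The gap is in (i). Your inductive scheme reduces to showing $x_l-(\pm z_l)\in\tilde I(l-1)$, but the mechanism you propose for this does not work. The relation $y_2=[\CP_1]\ast[\CP_1]$ is an identity for the $\ast$-product in $W^*$, not for the $\MU^*$-product; in $\MU^*$ one has $y_2=8\CP_2-9\CP_1^2$, so $\CP_1^2$-multiples are \emph{not} absorbed by $y_2$ integrally. More fundamentally, knowing only that $x_l-z_l$ is $\MU^*$-decomposable gives no reason for it to lie in $\tilde I$ at all: decomposables of degree $2l$ are not automatically in $I_{Ab}$, and the inductive hypothesis $J_W(m)=\tilde I(m)$ for $m<l$ cannot be applied until you already know the difference is in one of these ideals. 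You acknowledge this (``the technical core'') but offer no argument.

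What the paper supplies here is a genuine external input you are missing: the inclusion $J_W(l)\subset\tilde I(l)$ is proved \emph{first}, independently of the induction, using the module structure \eqref{free}. Any $x_k\in W^{2k}$ with $k\ge3$, written in $\MU^*[1/2]$ as $a+b\,[\CP_1]$ with $a,b\in\MSU^*[1/2]$ of positive degree, lies in the ideal $(y_2,y_3,y_4,\dots)\MU^*[1/2]$; and the specific $SU$-generators $y_i$ chosen in the paper lie in $I$ for $i\ge3$ --- by \eqref{234} for $i=3,4$ and by the Krichever-vanishing property of the quasi-toric (equivalently $SU$-flop) generators together with Lemma~\ref{specialization} for $i\ge5$. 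This is the step that puts $x_l$ into $\tilde I(l)$ and hence $x_l-z_l$ into $\tilde I(l)$. Only then does the paper use your observation that $s_l(x_l-z_l)=0$, so that $x_l-z_l\in A_{l-1}$, combined with Remark~\ref{I(l)} (elements of $\tilde I(l)$ lying in $A_{l-1}$ already lie in $\tilde I(l-1)$) to descend to $\tilde I(l-1)=J_W(l-1)$ and close the induction for the reverse inclusion. Without the genus input the argument does not get off the ground.
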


\begin{proof}
i) First let us prove $\tilde{J}_W\subset \tilde{I}$.  We have $I=I_{AB}$ by Lemma \ref{I=I_{Ab}}. 
Also $\Lambda_{Ab}=\MU^*/I$ is an integral domain by Lemma \ref{regular-J}. 
Moreover $\MU^*/I \otimes \mathbb{Q}$ is polynomial ring in two parameters $\CP_1,\CP_2$ by \eqref{Lambda-rationally}. 
Therefore it suffices to check $x_k \in \tilde{I}$ viewed as elements in $\MU^*[1/2]$. 

For the latter we need two facts. 
 
By $SU$ representatives in Introduction  $J_{SU} \subset I$, where  $J_{SU}$ is the ideal in $\MU^*[1/2]$ generated by 
$y_5, y_6, \cdots $, special unitary flops in $\MSU^*[1/2]$ of dimensions 
$\geq 10 $. These generators vanish under the Krichever genus and therefore vanish under its specialization, the classifying map of the Abel formal group law.  Then $y_3,y_4 \in I$ by 
 \eqref{234}  therefore  $(y_2,y_3,y_4,J_{SU}) \subset \tilde{I}.$
 
 Then by \cite{C-P} $W^*[1/2]$ is a free $\MSU^*[1/2]$ module generated by $1$ and $\CP_1$. So after inverting 2 each generator $x_k$, $k\leq l$, of $J_W(l)$ as element in $\MU^*[1/2]$ 
 is in kernel of the quotient map  to the integral domain 
 $\MU^*/(I(l),x_2)$. 
 Thus $x_k \in \tilde{I}(l)$ and $J_W \subset \tilde{I}$ as the ideals in $\MU^*$. Here we use Remark \ref{remark regular} for $\tilde{I}(l)$.  
 
 Now let's prove $I(l)\subset J_W(l)$, $\forall l\geq 2$ by induction on $l$. For $l=2$ and $l=3$ this is true because $y_2=8\CP_2-9\CP_1^2$ and $x_3=y_3=z_3=-\alpha_{22}$. Let $I(l-1)\subset J_W(l-1)$ by induction hypothesis. 
 To prove  $z_l\in (J_W(l-1),x_l)$ recall that  the main Chern number $s_{l}(z_l-x_l)=0$ by the definitions of $z_l$, $x_l$, i.e., \eqref{W_*} and \eqref{d_2d_1}. This means that $z_l-x_l=P(e_1,e_2,\cdots , e_{l-1})$ as decomposable element in $\MU^*$. Here $e_i$ are as in \eqref{e_n}. On the other hand $x_l\in J_W(l)\subset \tilde{I}(l)$ hence  $P=x_l-z_l$ is in the kernel of the quotient map $\MU^*\to \MU^*/\tilde{I}(l-1)$, i.e. $P\in \tilde{I}(l-1)=J_W(l-1)$. The induction hypothesis and Remark \ref{I(l)} completes the proof.  

\medskip

ii)  By \cite{BUSATO} again $I$ is prime ideal, therefore so is $\tilde{I}=J_W$ by Remark \ref{I(l)}. 



\end{proof}

\subsection{Proof of Theorem \ref{regular}. }

i) As $x_3,\cdots,x_l\subset J_W(l)$  by Lemma \ref{main lemma} ii) and Remark \ref{remark regular} the ring $R=\MU^*/(x_3,\cdots, x_l)$ is an integral domain. In particular,
the multiplication by $x_1=\CP_1$ is mono in $R$.  By Remark \ref{remark regular} again  $(x_1, x_3, \cdots, x_l)$ is regular in $\MU^*$ 
$\forall l$ and one has ii).

\subsection{Proof of Theorem \ref{abelian}}  
We use Lemma \ref{main lemma} and apply the specified generators $x_3,\cdots$ in $W^*[1/2]$ such that 
$(x_3,\cdots)=(z_3,\cdots )$ in $\MU^*[1/2]$.

\qed

\subsection{Proof of Theorem \ref{h}.}

We have to prove that all obstructions to commutativity vanish. This follows by  dimension argument and definition of obstruction elements in \cite{MI}. In particular, the obstruction elements to commutativity 
$\tilde{\mathcal{P}}_2(x_i)$
defined by certain cohomology operation \cite{MI} Theorem 3.4
$$
\tilde{\mathcal{P}}_2:\MU^{-2n}\to \MU^{-4n-2}\otimes \mathbb{F}_2.
$$ 
So that all obstruction elements 
are of odd complex dimension $-2i-1$. Therefore they vanish in  the scalar ring $\MU^*/(x_1,x_3,\cdots)\otimes \mathbb{F}_2$ which is evenly generated  as a quotient of $\Lambda_{Ab}$ by Remark \ref{LambdaAB generators}.

Alternatively, we can use Busato's long proof  \cite{BUSATO} of commutativity of $h^*_{Ab}(-)$. The proof is that $\tilde{\mathcal{P}}_2(\alpha_{ij}) $ projects to $0$ in $\Lambda_{Ab}\otimes \mathbb{F}_2$, for $\alpha_{ij} , \,\,i,j\geq 2$. In our case, we have to consider one more element $\alpha_{11}=x_1$. By \cite{MI} Theorem 3.4 we have $\tilde{\mathcal{P}}_2(\alpha_{11})=\alpha_{22}=x_3 $ modulo $\alpha_{11}$.
So that 
the obstruction element $\tilde{\mathcal{P}}_2(x_1)$ vanishes.

\subsection{Proof of Theorem \ref{example2}.}

One can use the fact that $\MSU^*[1/2]$ is a subring in $W^*[1/2]$ and follow  \cite{B0} where we proved the statement for generators $(y_i,i\geq 5)$ in $\MSU^*[1/2]$.  The scalar ring  $\Lambda_B$ of $F_B$ and its classifying map $f_B$ is calculated in \cite{BU-U} (see also Remark \ref{Lambda generators}). In particular,  $\Lambda_B$ has the ideal of elements of order two generated by  $f_B(e_{2^n-2})$, $n\geq 3$.
Here $e_i$ are as in \eqref{e_n}.
The quotient by torsion
$$\Lambda_{\mathcal{B}}=\Lambda_{B}/(f_B(e_{2^n-2}),\,\, n\geq 3).$$ 
is an integral domain. 

\qed

\begin{remark}
Using the arguments in the proof of Theorem \ref{h} one can prove that for 
$\Sigma$ containing all generators $x_n$ of $W^*$ with odd indices $n=2i-1\geq 1$, the cohomology theory $h^*_{\Sigma}$ is commutative.
\end{remark}

\subsection{Proof of  \eqref{234}}
\label{y2y3y4}
We have to check that $y_2,y_3,y_4$ are represented by $\SU$-manifolds i.e., all Chern numbers of $y_i$ having factor $c_1$ are zero. Then we have to check the main Chern number $s_i(y_i)$ for Novikov's criteria  \cite{NOV} for the membership of the set of polynomial generators in  
$\MSU^*[1/2]$.

In particular, $\SU$ manifold $x_n$ is a polynomial
generator if and only if 
\begin{align}
	\label{Novikov}
	s_n(y_n)=
	\begin{cases} \pm 2^kp &\mbox{if } n=p^l,\,\,\,p\,\,\, \mbox{is odd prime, }\\
		\pm 2^kp &\mbox{if } n+1=p^l,\,\,\, p\,\,\,\mbox{is odd prime, } \\
		\pm 2^k& \mbox{otherwise}.
	\end{cases}
\end{align}

Rewrite the elements $y_i$ in $\MU^*$ as follows

 
\begin{align*}
&y_2=\CP_2-\frac{9}{8}\CP_1^2;\\
&2y_3=-3\CP_3+8\CP_1\CP_2-5\CP_1^3;\\
&y_4=-2\CP_1^4+7\CP_1^2\CP_2-3\CP_2^2-4\CP_1\CP_3+2\CP_4.\\
\end{align*}

The proof follows from the following calculations for the main Chern numbers.

\begin{align*}
&c_1c_1[\CP_1^2]=8, &&c_1c_1[\CP_2]=9, &&&c_2[\CP_1^2]=4, &&&&& c_2[\CP_2]=3.\\ 
\end{align*}
This implies that that $c_1c_1(y_2)=0$. There are no more Chern numbers involving $c_1$  and  $s_2(y_2)=3$.  Therefore $y_2$ forms a generator of $\MSU^4[1/2]$.

\medskip

For $y_3$:
\begin{align*}
	& X             &c_3(X)   && c_1c_2(X)     &&& c_1c_1c_1(X)\\
	&\CP_3           &4        && 24            &&& 64  \\
	&\CP_1\CP_2     &6        && 24            &&& 54    \\
	&\CP_1^3        &8        && 24            &&& 48.\\
\end{align*}

It follows all Chern numbers of $y_3$  involving $c_1$ are zero. 
Then $y_3$ forms a generator in $\MSU^6[1/2]$ as $s_3(y_3)=-2\cdot 3$ satisfies the  Novikov criteria.

Similarly for $y_4$:  we have $s_4(y_4)=2\cdot 5$  and 

\begin{align*}
	&X          &c_1c_1c_1c_1(X)  &&c_1c_1c_2(X) &&&c_1c_3(X)    \\
	&\CP_1^4     &384             &&192          &&&64              \\
	&\CP_1^2\CP_2 &432            && 204         &&&60           \\  
	& \CP_2^2     &486            && 216         &&&54           \\
	&\CP_1\CP_3   &512            && 224         &&&56            \\
	&\CP_4       &625             && 250         &&&50.              \\
\end{align*}

\qed

\section{$c_1$-spherical cobordism modulo quasi-toric manifolds}

 Recall again from \cite{TO} that in special unitary cobordism $\MSU^*[1/2]$ one can construct polynomial generators $y_n,\,\,\,n\geq 5$ using Euclid's algorithm and $SU$-flops or equivalently generators specified in Introduction.
 
 The restriction of the Krichever complex elliptic genus
 $$
 \MU^*\to \mathbb{Q}[p_1,p_2,p_3,p_4]
 $$
 on $\MSU^*[1/2]$ gives the genus
 $$
 \MSU^*[1/2]\to \mathbb{Z}[1/2][y_2,y_3,y_4],
 $$
 as it is zero on all $y_i, i\geq 5$.

As $W^*[1/2]$ is  a free $\MSU^*[1/2]$ module generated by 1 and $x_1=\CP_1$  one has the following

\begin{theorem}
$SU$-flops of dimension $\geq 10$ as elements in the ring $W^*[1/2]$ with $*$ product generate the kernel ideal of the genus
$$
\phi_W:W^*[1/2]\to\mathbb{Z}[1/2][x_1,x_3,x_4].
$$
\end{theorem}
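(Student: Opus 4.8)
The plan is to reduce the statement about $W^*[1/2]$ to the already-established analysis of $SU$-bordism and the Abel cohomology. The first step is to pin down the target genus $\phi_W$. Since $W^*[1/2]$ is complex oriented, $\phi_W$ is the composite of the classifying map of $F_W$ with a specialization; concretely $\phi_W$ should be the map $W^*[1/2]\to\mathbb{Z}[1/2][x_1,x_3,x_4]$ sending $x_1,x_3,x_4$ to the named polynomial generators and killing $x_k$ for $k\geq 5$. With the generators $y_i=[M^{2i}]$, $i\geq 5$, specified in the Introduction as $SU$-flops (quasi-toric $SU$-manifolds of minimal Chern number) on which the Krichever genus vanishes, the claim becomes: the kernel of $\phi_W$ is exactly the ideal generated by the $y_i$, $i\geq 5$, regarded as elements of $(W^*[1/2],*)$ via the group inclusion $W^*\subset\MU^*$.

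Next I would exploit the module decomposition \eqref{free}: $W^*[1/2]$ is a free $\MSU^*[1/2]$-module on $1$ and $x_1=\CP_1$. Under this decomposition the restriction of $\phi_W$ to $\MSU^*[1/2]$ is precisely the Krichever elliptic genus on $\MSU^*[1/2]$, which by the quoted results factors as $\MSU^*[1/2]\to\mathbb{Z}[1/2][y_2,y_3,y_4]$, vanishing on all $y_i$ with $i\geq 5$; equivalently, by the cited Theorem of \cite{TO} its kernel on $\MSU^*[1/2]$ is the ideal of $SU$-flops, which (after inverting $2$) is generated by $y_5,y_6,\dots$. So on the $\MSU^*[1/2]$-summand the kernel is under control. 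The point of the freeness statement is that it lets me transport this: an element $a + b\cdot x_1$ with $a,b\in\MSU^*[1/2]$ lies in $\ker\phi_W$ iff both $a$ and $b$ lie in the $SU$-flop ideal of $\MSU^*[1/2]$, since $\phi_W(x_1)=x_1$ is a polynomial generator (a non-zero-divisor) in the target. Hence $\ker\phi_W = J_{SU}^{W}\oplus J_{SU}^{W}\cdot x_1$ as modules, where $J_{SU}^W$ is the $SU$-flop ideal.

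It then remains to identify this module with the ideal $(y_5,y_6,\dots)$ of the ring $(W^*[1/2],*)$. One inclusion is immediate: each $y_i$, $i\geq 5$, lies in $\ker\phi_W$, and $\ker\phi_W$ is a $*$-ideal, so $(y_i,i\geq 5)_*\subseteq\ker\phi_W$. For the reverse inclusion I would argue that, modulo $2$ being inverted, the $*$-product differs from the $\MU^*$-product by the correction term $2[V]\partial a\partial b$ recorded in Section \ref{Pre}, so the $*$-ideal generated by the $y_i$ and the ordinary $\MU^*[1/2]$-ideal they generate, intersected with $W^*[1/2]$, coincide up to the $\MSU^*[1/2]$-module structure; combined with the decomposition above and the fact that $y_5,y_6,\dots$ generate $J_{SU}^W$ as an $\MSU^*[1/2]$-ideal, this gives $\ker\phi_W\subseteq (y_i,i\geq 5)_*$. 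I expect the main obstacle to be precisely this last bookkeeping step: verifying that passing from the associative product on $\MU^*$ to the product $*$ on $W^*$ does not enlarge or shrink the relevant ideal, i.e. that the Conner–Floyd idempotent $\pi_0$ carries the $\MU^*[1/2]$-ideal $(y_i,i\geq 5)$ onto the $*$-ideal $(y_i,i\geq 5)_*$ inside $W^*[1/2]$. Once that compatibility is in place, the theorem follows by assembling the two inclusions.
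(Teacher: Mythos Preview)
Your approach is essentially the paper's: the paper gives no formal proof beyond the sentence ``As $W^*[1/2]$ is a free $\MSU^*[1/2]$ module generated by $1$ and $x_1=[\CP_1]$ one has the following'', invoking exactly the free module decomposition \eqref{free} together with Totaro's identification of the kernel of the Krichever genus on $\MSU^*[1/2]$ with the ideal of $SU$-flops. Your write-up simply makes this explicit, and the obstacle you flag concerning the $*$-product versus the $\MU^*$-product dissolves once you note that $\partial$ vanishes on $SU$-classes, so for $y\in\MSU^*[1/2]$ and $a\in W^*[1/2]$ one has $a*y=a\cdot y$; thus the $*$-ideal and the $\MSU^*[1/2]$-submodule generated by the $y_i$, $i\ge 5$, coincide.
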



\end{document}